\newtheorem{theorem}{Theorem}[section]
\newtheorem{lemma}[theorem]{Lemma}
\newtheorem{proposition}[theorem]{Proposition}
\newtheorem{definition}[theorem]{Definition}
\newtheorem{remark}[theorem]{Remark}
\numberwithin{equation}{section}
\begin{document}\title[Uniqueness]{A uniqueness theorem for free waves on $\mathbb{R}^{n+1}$}

\author{Phillip Whitman}
\address{Department of Mathematics\\
Princeton University\\
USA}
\email{pwhiman@math.princeton.edu}

\author{Pin Yu}
\address{Mathematical Sciences Center\\
Tsinghua University\\
China}
\email{pinyu@math.princeton.edu}
\maketitle

\begin{abstract}
In this short note, based on Carleman estimates and Holmgren's type theorems, we provide a converse theorem of the classical Huygens principle for free wave equations on $\mathbb{R}^{n+1}$. Possible generalizations to other underlying space-times or other wave type equations are also discussed.
\end{abstract}

\section{Introduction}
We shall study uniqueness properties of free linear wave equation
\begin{equation}\label{free_wave}
 \Box \varphi = 0,
\end{equation}
on $\mathbb{R}^{n+1}$ where we assume $\varphi$ is a smooth (although $C^2$ is enough, we shall not stick to the minimal regularity) solution. In view of the Huygens principle or standard energy estimates, if the initial data vanishes on a set on $\{ t=0 \}$ (here vanishing means that both $\varphi$ and $\partial_t \varphi$ vanish), the free wave vanishes on the domain of dependence of this set. In this note, we provide a converse version of this statement. To state one version of the main theorem,  we first list some notations. Let $r=\sqrt{\sum_{i=1}^n x_i^2}$ be the standard radius function. We use $B_{t_0}(r) \subset \{t=t_0\} $
to denote the $n$-dimensional ball of radius $r$ centered at the origin of hyperplane $\{t=t_0\}$.
The Huygens principle predicts that if the data $(\varphi, \partial_t \varphi)|_{t=0} = 0$ on the ball $B_{0}(3)$, then $(\varphi, \partial_t \varphi)$ vanishes on $B_{1}(2)$ and $B_{-1}(2)$. In converse, if one knows  $(\varphi, \partial_t \varphi)$ vanishes on $B_{1}(2)$ and $B_{-1}(2)$, by the Huygens principle, one can only say that at time slice $t=0$, the wave must vanish on $B_0(1)$. One may ask if it is possible to show more, say to determine the maximal domain on which the wave vanishes. The answer is yes: the following theorem shows that the knowledge of free waves on $B_{1}(2)$ and $B_{-1}(2)$ are enough to determine the waves on $B_0(3)$:

\begin{theorem}\label{theorem1}
Assume that $\varphi$ is a smooth solution
\begin{equation}
 \Box \varphi = 0,
\end{equation}
if $(\varphi, \partial_t \varphi)$ vanishes on $B_{1}(1)$ and $B_{-1}(1)$, then it must vanish on $B_0(3)$.
\end{theorem}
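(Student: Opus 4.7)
\emph{Step 1: initial vanishing via finite propagation.} From the vanishing of the Cauchy data on $B_{\pm 1}(1)$, the classical domain-of-dependence theorem immediately produces a ``seed'' region
\[
D_+ \cup D_-, \qquad D_\pm = \{(t,x) : |t \mp 1| + |x| \leq 1\},
\]
on which $\varphi \equiv 0$. These two double-cones meet at the spacetime origin, and provide a controlled set of vanishing (with nonempty interior at every point of the open diamonds) from which to start the unique continuation argument.

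\emph{Step 2: a pseudo-convex foliation.} I would interpolate between $D_+ \cup D_-$ and $B_0(3)$ by a one-parameter family of smooth hypersurfaces $\{\Sigma_\lambda\}_{\lambda \in [0, \lambda_\star]}$, adapted to the $t \leftrightarrow -t$ and rotational symmetries of the hypothesis. A natural candidate is the family of spacetime spheres through the two points $(\pm 1, 0)$,
\[
\Sigma_\lambda = \{(t,x) : t^2 + (|x| - \lambda)^2 = 1 + \lambda^2\}, \qquad \lambda \in [0, 4/3],
\]
which meet $\{t=0\}$ at $|x| = \lambda + \sqrt{1+\lambda^2}$, so that $\Sigma_{4/3}$ reaches exactly $|x|=3$. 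As $\lambda$ grows, the leaves sweep the gap between the seed and the target disk.

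\emph{Step 3: Carleman propagation leaf by leaf.} For each leaf I would apply a H\"ormander-type Carleman estimate with weight $e^{\tau\psi_\lambda}$ built from a defining function for $\Sigma_\lambda$, yielding the standard Holmgren-type local conclusion: vanishing of $\varphi$ on one side of $\Sigma_\lambda$ in a neighbourhood of $p\in \Sigma_\lambda$ forces vanishing in a full spacetime neighbourhood of $p$. A continuity argument in $\lambda$ (openness from Carleman, closedness by compactness and a standard limit argument) then propagates the seed vanishing across the whole foliation, ultimately yielding $\varphi \equiv 0$ on $B_0(3)$.

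\emph{Main obstacle.} The crux is verifying \emph{strong pseudo-convexity} for $\Box$ throughout the foliation. The leaves $\Sigma_\lambda$ are neither uniformly spacelike nor uniformly timelike: they are spacelike near the poles $(\pm 1, 0)$, timelike near the equator $\{t=0\}$, and characteristic precisely where they cross the light cone from the origin. Checking (or engineering) pseudo-convexity through this mixed-type geometry --- perhaps by perturbing $\Sigma_\lambda$, choosing a cleverer weight $\psi_\lambda$, or exploiting spherical symmetry to reduce the computation to the $(t,r)$-plane --- is where the main technical work lies, and is presumably why the paper advertises Carleman estimates rather than purely classical Holmgren techniques. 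A secondary but less serious issue is the coordinate degeneracy at the axis $r=0$, handled by working in Cartesian coordinates and invoking rotational invariance of both $\Box$ and the foliation.
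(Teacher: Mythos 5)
There are genuine gaps here, both in the geometry of your foliation and in the step that would get the continuation started. First, on the seed: taken literally (radius--$1$ balls at $t=\pm1$), your region $D_+\cup D_-=\{|t\mp1|+|x|\le 1\}$ meets $\{t=0\}$ only at the origin, and the asserted conclusion is then actually false: Cauchy data at $t=0$ supported in the annulus $2<|x|<3$ produce a solution vanishing on $D_+\cup D_-$, hence with vanishing data on $B_{\pm1}(1)$, but not vanishing on $B_0(3)$. The hypothesis the paper really uses (see its introduction) is vanishing on $B_{\pm1}(2)$; the seed is then the two solid cones whose interface with the unknown region is the bifurcate null hypersurface $\mathcal{H}=\{r=|t|+1\}$, and the maximal region one can hope for is $\mathcal{D}=\{1<u<3,\ 1<v<3\}$ with $u=r+t$, $v=r-t$, together with the cones. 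Second, even with the radius--$2$ seed, your spheres through $(\pm1,0)$ overshoot this maximal region: $\Sigma_{4/3}$ contains the point $t=1$, $|x|=8/3$, where $u=11/3>3$, and a solution with data at $t=0$ supported near $|x|=3.2$ has vanishing data on $B_{\pm1}(2)$ yet need not vanish there. So the leaf-by-leaf propagation cannot be carried across entire leaves for $\lambda$ close to $4/3$; the ``main obstacle'' you flag --- the characteristic points of $\Sigma_\lambda$, which for $\lambda>1$ lie outside the seed on the frontier of the swept region --- is not a technicality to be engineered away by a better weight: it is exactly where the statement you are propagating stops being true. Any admissible foliation must stay inside $\mathcal{D}$ and degenerate to its null boundary $\{u=3\}\cup\{v=3\}$.

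The second, independent gap is at the start. The seed is bounded by null hypersurfaces, and unique continuation across a single characteristic hypersurface is false for $\Box$ (e.g.\ $\varphi=h(x_1-t)$ vanishes identically on one side of $\{x_1=t\}$); a leaf crossing $\mathcal{H}$ transversally at an interior point of a null generator has, locally, points where vanishing is unknown on \emph{both} of its sides, so no H\"ormander-- or Holmgren--type step applies there. The paper supplies precisely the two ingredients your plan lacks. (i) It first obtains a ``first neighborhood'' $\mathcal{O}\subset\mathcal{D}$ of vanishing near the bifurcate sphere $\{t=0,\,r=1\}$ via the Ionescu--Klainerman Carleman estimate with weight $f_l=\log((u-l)(v-l))$, which exploits the bifurcate structure (two transversally intersecting null cones) and is what allows one to cross the characteristic boundary at all. (ii) It then exhausts $\mathcal{D}(\delta)$ by the explicit, everywhere \emph{timelike} leaves $H_s$, the level sets of $\frac{(u-1)(v-1)}{(3-\delta-u)(3-\delta-v)}$, and runs a continuity argument in $s$ using Proposition \ref{local_uniqueness_theorem}: since $T=\partial_t$ is timelike and the leaves are timelike (hence non-characteristic), no pseudoconvexity verification is needed anywhere. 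In short, the paper never continues across spacelike or characteristic leaf points, which is how it sidesteps the mixed-type difficulty that your proposal leaves unresolved.
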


If we study waves on $\mathbb{R}^{1+1}$, the above theorem is almost obvious: we can decompose the wave into outgoing and incoming components and show that each component is determined by its value on $B_{1}(2)$ and $B_{-1}(2)$. This proof also motivates the theorem on higher dimensions. We define the standard optical functions $u$ and $v$ on the Minkowski space-time:
\begin{equation*}
 u = r + t, \quad v = r - t,
\end{equation*}
Let $\mathcal{H}$ be the bifurcate light cone
\begin{equation*}
 \mathcal{H} = \{(t,x) | r = |t| + 1\},
\end{equation*}
where $x = (x_1,\cdots, x_n)$ and let $\mathcal{H_\varepsilon}$ be the truncated light cone
\begin{equation*}
 \mathcal{H}_{\varepsilon, L} = \{(t,x) | r = |t| + 1, -\varepsilon < t < L\},
\end{equation*}
where $0<\varepsilon \leq L$. We use $\mathcal{D}_{\varepsilon,L}$ to denote the following set
\begin{equation*}
 \{(t, x)| 1<u<2L+1, 1<v<1+2\varepsilon\}.
\end{equation*}
Our main theorem is the following uniqueness property for characteristic problems:
\begin{theorem}\label{MainTheorem}
 If $\varphi$ is a $C^2$ solution for the free wave equation
\begin{equation*}
 \Box \varphi = 0,
\end{equation*}
and $\varphi \equiv 0$ on $\mathcal{H}_{\varepsilon, L}$, then $\varphi \equiv 0$ on $\mathcal{D}_{\varepsilon,L}$.
\end{theorem}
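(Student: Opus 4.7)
The plan is to recognize Theorem~\ref{MainTheorem} as a classical characteristic Cauchy (Goursat) problem and settle it by a Grönwall/Picard iteration in null coordinates. In null coordinates $(u,v,\omega)\in\mathbb{R}^{2}\times S^{n-1}$, with $u=r+t$ and $v=r-t$, the Minkowski metric reads $g = du\,dv + r^{2}g_{S^{n-1}}$ with $r=(u+v)/2$, and $\mathcal{D}_{\varepsilon,L}$ becomes the rectangular characteristic box $(1,2L+1)\times(1,1+2\varepsilon)\times S^{n-1}$. A case check on the sign of $t=(u-v)/2$ shows that both past null faces $\{u=1\}\cap\overline{\mathcal{D}_{\varepsilon,L}}$ and $\{v=1\}\cap\overline{\mathcal{D}_{\varepsilon,L}}$ of this box lie inside $\mathcal{H}_{\varepsilon,L}$, so the hypothesis amounts to $\varphi$ vanishing on the past null boundary of a characteristic diamond on which $r\ge 1$.

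In these coordinates, $\Box\varphi = 4\partial_u\partial_v\varphi+\frac{n-1}{r}(\partial_u+\partial_v)\varphi+\frac{1}{r^{2}}\Delta_{S^{n-1}}\varphi$. I would kill the first-order terms by setting $\psi = r^{(n-1)/2}\varphi$, which yields
\begin{equation*}
 4\partial_u\partial_v\psi = \frac{(n-1)(n-3)}{4\,r^{2}}\psi - \frac{1}{r^{2}}\Delta_{S^{n-1}}\psi,
\end{equation*}
still with Goursat data $\psi\equiv 0$ on $\{u=1\}\cup\{v=1\}$. Decomposing $\psi(u,v,\omega) = \sum_{\ell}\psi_{\ell}(u,v)Y_{\ell}(\omega)$ into spherical harmonics decouples the equation mode-by-mode into the family of 1+1-dimensional wave equations with bounded potential
\begin{equation*}
 \partial_u\partial_v\psi_{\ell} = \frac{c_{\ell,n}}{r^{2}}\,\psi_{\ell}, \qquad c_{\ell,n}=\tfrac{1}{4}\bigl(\ell+\tfrac{n-1}{2}\bigr)\bigl(\ell+\tfrac{n-3}{2}\bigr),
\end{equation*}
each with vanishing Goursat data.

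Integrating twice rewrites each mode as the Volterra equation
\begin{equation*}
 \psi_{\ell}(u,v) = c_{\ell,n}\int_{1}^{u}\!\int_{1}^{v}\frac{\psi_{\ell}(u',v')}{r(u',v')^{2}}\,dv'\,du'
\end{equation*}
on the compact rectangle $[1,2L+1]\times[1,1+2\varepsilon]$, where $1/r^{2}$ is bounded. Standard Picard iteration (equivalently, a two-variable Grönwall inequality) then forces $\psi_{\ell}\equiv 0$ for every $\ell$. Since the mode-wise vanishing is the same as $\psi(u,v,\cdot)$ being $L^{2}(S^{n-1})$-orthogonal to every $Y_{\ell}$ at each $(u,v)$, continuity of $\psi$ gives $\psi\equiv 0$, and hence $\varphi\equiv 0$ on $\mathcal{D}_{\varepsilon,L}$.

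The delicate part is the reduction step: the 1+1-dimensional case is essentially trivial, so all the real work is packaging the sphere factor, verifying that the conformal change $\psi=r^{(n-1)/2}\varphi$, the spherical-harmonic decomposition, and the Volterra iteration all interact cleanly with the merely $C^{2}$ regularity of $\varphi$. A cleaner alternative is to bypass the decomposition altogether and apply a $\partial_u\partial_v$ energy estimate directly to $\psi$, obtaining an inequality of the form $\|\psi(u,v)\|_{L^{2}(S^{n-1})}\lesssim\int_{1}^{u}\!\int_{1}^{v}\|\psi(u',v')\|_{L^{2}(S^{n-1})}\,dv'\,du'$ that can be iterated without mode decomposition. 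As indicated by the paper's abstract, the same conclusion can also be reached from a Carleman estimate for $\Box$ with a pseudoconvex weight of the form $e^{\lambda\phi(u,v)}$; that route is heavier but generalizes more flexibly to variable-coefficient or curved-background settings.
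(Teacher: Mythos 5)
Your main argument is correct, and it takes a genuinely different---and, for this particular theorem, more elementary---route than the paper. The paper proceeds in two stages: the Ionescu--Klainerman Carleman estimate adapted to the bifurcate cone gives vanishing in a first neighborhood of $\mathcal{H}$, and then the Holmgren-type local uniqueness result of Tataru and Robbiano--Zuily (Proposition \ref{local_uniqueness_theorem}) is propagated along a foliation by timelike hypersurfaces and iterated in $t$ to exhaust $\mathcal{D}_{\varepsilon,L}$. You instead exploit the exact rotational symmetry of the free wave operator and of the cone $r=|t|+1$: after the substitution $\psi=r^{(n-1)/2}\varphi$ and decomposition into spherical harmonics, each mode solves $\partial_u\partial_v\psi_\ell=c_{\ell,n}r^{-2}\psi_\ell$ on the coordinate rectangle $[1,2L+1]\times[1,1+2\varepsilon]$, where $r=(u+v)/2\ge 1$, with zero data on the two faces $\{u=1\}$ and $\{v=1\}$, which indeed lie in the closure of $\mathcal{H}_{\varepsilon,L}$; the scalar Volterra/Gr\"onwall iteration kills each mode (the constant $c_{\ell,n}\sim\ell^2$ grows with $\ell$, but no uniformity in $\ell$ is needed), and completeness of spherical harmonics plus continuity gives $\varphi\equiv 0$. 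The algebra checks out ($c_{\ell,n}=\tfrac14(\ell+\tfrac{n-1}{2})(\ell+\tfrac{n-3}{2})$ is correct), and $C^2$ regularity suffices both to differentiate under the spherical integral and to integrate the mode equation over the rectangle, so this is a valid proof of Theorem \ref{MainTheorem}. What the paper's heavier machinery buys is robustness: your reduction is rigidly tied to the free equation on Minkowski space with the cone centered on the axis of symmetry, and any non-radial lower-order terms $V\varphi+W(\varphi)$, systems, or curved backgrounds (Schwarzschild, Kerr) destroy the decoupling, whereas the Carleman-plus-Holmgren route survives exactly those generalizations, as the paper's closing remark emphasizes.

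Two side statements in your write-up should be corrected. First, the two faces are not both ``past'' null faces, and $\mathcal{D}_{\varepsilon,L}$ is not contained in the domain of dependence of $\mathcal{H}_{\varepsilon,L}$: along an ingoing ray $u=\mathrm{const}$ the face $\{v=1\}$ lies to the \emph{future} of points of the box, and past-directed outgoing null rays from points of $\mathcal{D}_{\varepsilon,L}$ escape to past null infinity without ever meeting $\mathcal{H}$. This is precisely the sense in which the problem is ill posed and why no standard energy/domain-of-dependence argument applies; your two-dimensional Volterra argument is immune to this only because the Riemann/Picard iteration on a characteristic rectangle is insensitive to time orientation. Second, and relatedly, the proposed ``cleaner alternative'' bound $\|\psi(u,v)\|_{L^2(S^{n-1})}\lesssim\int_1^u\int_1^v\|\psi(u',v')\|_{L^2(S^{n-1})}\,dv'\,du'$ does not follow as stated: $\Delta_{S^{n-1}}$ is unbounded on $L^2(S^{n-1})$, and the unfavorable orientation of the face $\{v=1\}$ obstructs the characteristic energy identity that would normally absorb the angular derivatives. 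The mode-by-mode version avoids this because each fixed-$\ell$ potential $c_{\ell,n}r^{-2}$ is bounded on the rectangle; that is the version you should keep as the proof.
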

Theorem \ref{theorem1} is an easy consequence of Theorem \ref{MainTheorem} by setting the parameters $\varepsilon = L =1$. In fact, Theorem \ref{MainTheorem} can also be deduced from the proof of Theorem \ref{theorem1} which is given in the next section.

We now discuss briefly the related history of uniqueness problems. The question of uniqueness of smooth solutions to linear wave equations with Cauchy data prescribed on a smooth hypersurface is very strongly linked to whether you demand your coefficients, surface and solution to be analytic or merely infinitely differentiable.  In the case that the coefficients, the surface and the solution are analytic, then it is known that unique continuation (and in fact existence) holds across any non-characteristic surface. This is known as the Cauchy-Kovalewski theorem. If the coefficients and the surface are analytic but the solution is merely infinitely differentiable, the same uniqueness (though not existence) result is true by Holmgren's theorem (see \cite{John} for proofs of Cauchy-Kovalewski and Holmgren's theorems). When the coefficients and the solution are infinitely differentiable but not necessarily analytic, unique continuation does not in general hold for non-characteristic surfaces (see \cite{Alinhac-Baouendi} and \cite{Hormander} for counterexamples). It is in this setting that the notion of pseudoconvexity arises. This condition says that for unique continuation to hold across a surface, characteristic curves tangent to the surface must bend toward the side of the surface where you have information. In the wave equation case, this condition is equivalent to the following:
\begin{definition}
We say the surface, S, given by $\{(t,x)|f(t,x)=0\}$ is \emph{strongly pseudoconvex} if for every vector field, $X$, we have
\begin{equation}
g(X,X)=g(X,\nabla f)=0\Rightarrow \nabla^2 f(X,X)>0,
\end{equation}
where the gradient $\nabla$ and Hessian $\nabla^2$ are taken with respect to the standard Minkowski metric.
\end{definition}
In this case a theorem of H\"ormander (in fact in the more general setting) shows that pseudoconvexity implies unique continuation. This is done through the use of Carleman estimates. What about in between?  Namely, are analyticity of coefficients or the solution in certain variables and not in others enough for uniqueness to hold?  The first theorem in this direction was the paper of Lerner \cite{Lerner}. He proves a global uniqueness theorem for equations of the form $\partial_t^2-A(x,D_x)$ where $A$ is uniformly elliptic across the surface $x_1=0$ under some assumptions (in order to prove energy estimates to control the solution as $t\rightarrow\infty$). Once he gains this control, he takes a Fourier-Gauss transform to turn the equation into an elliptic problem, he then uses known estimates to prove uniqueness for the elliptic equation and then inverts the Fourier-Gauss transform to conclude. Later, Robbiano \cite{Robbiano} was able to prove a local theorem of this type. Eventually Tataru \cite{Tataru01} and then Robbiano-Zuily \cite{Robbiano-Zuily} were able to prove this type of theorem for any time-like surface. In fact they were able to do much more. First of all the coefficients no longer had to be independent of $t$, they could be merely analytic in $t$. Secondly, it was done for arbitrary linear partial differential operators, not just wave equations. Finally, they were able to extend the theorem to situations when $\partial_t$ was not time-like, and instead the operator obeyed a different pseudoconvexity condition. This pseudoconvexity condition has the following form:
\begin{definition}
Let $T$ be a Killing vector field, then we say that a surface $S=\{f=0\}$ is $T$-conditional pseudoconvex at a point $x_0 \in S$ if for every vector field, $X$, we have that $g(X,X)=g(X,T)=g(X,\nabla f)=0 \Rightarrow \nabla^2 f(X,X)<0$ at the point $x_0$
\end{definition}
If the Killing vector field $T$ is time-like, then the condition is satisfied trivially, because there are no null vectors orthogonal to $T$. In our case we take the vector field $T=\partial_t$ which is everywhere time-like. We can now state the theorem proven in \cite{Tataru01} and \cite{Robbiano-Zuily} in the wave equation setting:
\begin{proposition}\label{local_uniqueness_theorem}
Let $S$ be a surface in Minkowski space locally given by the graph of a function $\{f(x)=0\}$ near a point $(t_0,x_0)$ (say in a neighborhood $U$ of $(t_0,x_0)$). Let $\varphi$ be zero in $U \cap \{f \leq 0\}$, and solve a linear wave equation
\begin{equation}
\Box \varphi=V\varphi + W(\varphi),
\end{equation}
where $V$ is a $C^\infty$ function and $W$ is a $C^\infty$ vector field. Assume that $V$ and $W$ are analytic in the variable $t$. Then there exists a neighborhood $\tilde{U}$ of $(t_0,x_0)$ so that $\varphi=0$ in $\tilde{U}$
\end{proposition}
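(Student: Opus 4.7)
The plan is to follow the strategy introduced by Tataru: combine a Carleman estimate in the spirit of H\"ormander with an FBI--type (Gauss--Fourier) transform in the $t$-variable, exploiting the analyticity of $V$ and $W$ in $t$. The guiding intuition is that Hörmander's classical subelliptic estimate requires strong pseudoconvexity, and the directions in which this fails for a generic time-like $S$ are exactly the null vectors tangent to $S$ that are orthogonal to $T=\partial_t$. The analyticity hypothesis should let us effectively ``delete'' the $T$-direction from the symbolic computation, and the way to do this rigorously is to complexify $t$ through a Gaussian convolution.

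First I would localize and straighten $S$ near $(t_0,x_0)$, so that $f$ becomes a single coordinate (say $x_1$) and $\varphi$ is supported in $\{x_1\geq 0\}$. I would then look for a strictly pseudoconvex weight $\phi$ of the schematic form $\phi(t,x)=x_1+\alpha|x'|^2-\beta x_1^2+\gamma t^2$ adapted to the point $(t_0,x_0)$, designed so that the level set $\{\phi=\phi(t_0,x_0)\}$ separates the support of $\varphi$ from a small interior target. Testing $\phi$ against $\Box$ via the standard commutator computation, one sees that the only obstruction to positivity comes from null directions $X$ with $g(X,T)=0$, exactly where the hypothesis of $T$-conditional pseudoconvexity enters.

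To handle these directions, introduce a large auxiliary parameter $\lambda$ and consider the transformed function
\begin{equation*}
\varphi_\lambda(t,x) = \Bigl(\frac{\lambda}{\pi}\Bigr)^{1/2}\int e^{-\lambda(t-s)^2/2}\,\varphi(s,x)\,ds.
\end{equation*}
Since $V$ and $W$ are analytic in $t$, one can deform the contour into the complex $t$-plane and derive a wave-type equation for $\varphi_\lambda$ in which the effective scale in the $t$-direction is $\lambda^{-1/2}$. Running the Carleman estimate with the modified weight $e^{\tau\phi-\lambda t^2/2}$ applied to $\varphi_\lambda$, and expanding the conjugated operator in $\tau$ and $\lambda$, the Gaussian factor suppresses precisely the bad null directions identified above; what survives is a positivity requirement which, at the principal symbol level, is exactly $T$-conditional pseudoconvexity of $\phi$. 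This yields an $L^2$ estimate of the form $\tau\|e^{\tau\phi-\lambda t^2/2}\varphi_\lambda\|^2 \lesssim \|e^{\tau\phi-\lambda t^2/2}P\varphi_\lambda\|^2$ on a slightly shrunken neighborhood, with error terms controlled by the analytic bounds on $V,W$ in a complex strip.

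Finally, a standard cutoff argument applied to this Carleman inequality forces $\varphi_\lambda$ to vanish in a fixed neighborhood $\tilde U$ of $(t_0,x_0)$, uniformly for all large $\lambda$; passing to the limit $\lambda\to\infty$ and using $\varphi_\lambda\to\varphi$ in $\mathcal{D}'$ gives $\varphi\equiv 0$ in $\tilde U$. The main obstacle is not any single step in isolation but the simultaneous tuning of the two parameters $\tau$ and $\lambda$: one must keep $\tau/\lambda$ small enough for the $T$-conditional symbol calculation to dominate, while keeping $\tau$ itself large enough to absorb lower-order terms coming from $V$ and $W$ and from the analytic-extension errors. Making this bookkeeping work honestly is the technical heart of Tataru's argument, and I do not see a route that avoids it.
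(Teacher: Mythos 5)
A point of context first: the paper does not actually prove this proposition --- it is stated as a quotation of the theorems of Tataru \cite{Tataru01} and Robbiano--Zuily \cite{Robbiano-Zuily} specialized to the wave equation, and it is then used as a black box in the continuity argument across the time-like leaves $H_s$. So there is no in-paper proof to compare you against; the relevant comparison is with those references, and your outline does follow their route: Gauss--Fourier (FBI) transform in $t$, a Carleman estimate for the conjugated operator, and a pseudoconvexity condition taken only over null directions orthogonal to $T=\partial_t$ (which, as the paper notes, is vacuous here since $T$ is time-like, so no cleverness in the choice of $\phi$ is needed beyond standard convexification).

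As a proof, however, your proposal has genuine gaps, the largest of which you flag yourself. The claim that ``the Gaussian factor suppresses precisely the bad null directions'' so that only $T$-conditional pseudoconvexity survives at the symbol level is not a routine commutator computation: it is the content of Tataru's theorem, and establishing the two-parameter Carleman estimate for $e^{\tau\phi-\lambda t^2/2}$ (equivalently, with the weight $e^{-|D_t|^2/(2\lambda)}e^{\tau\phi}$), including the simultaneous calibration of $\tau$ and $\lambda$, occupies most of \cite{Tataru01} and \cite{Robbiano-Zuily}. Two further points in your sketch would bite if you tried to complete it: (i) the Gaussian convolution does not preserve supports in $t$, so ``a standard cutoff argument forces $\varphi_\lambda$ to vanish in a fixed neighborhood'' is not literally available --- $\varphi_\lambda$ never vanishes exactly, and one only obtains bounds of size $O(e^{-c\lambda})$ on $\varphi_\lambda$ in the target region, which is what must be passed to the limit $\lambda\to\infty$; (ii) the contour deformation producing an equation for $\varphi_\lambda$ requires $V$ and $W$ to extend holomorphically with uniform bounds to a fixed complex strip in $t$, which has to be arranged by localization since analyticity is only assumed locally. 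None of this says the approach is wrong --- it is the right and, as far as is known, the only approach --- but as written your argument is a road map to \cite{Tataru01} and \cite{Robbiano-Zuily} rather than a self-contained proof; for the purposes of this paper the honest move is the one the authors make, namely to cite those results.
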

In applications of the theorem in this paper, we shall take $V=0$ and $W=0$, but the above theorem clearly suggests the way to generalize our results to other types of linear wave equations (namely, assuming analyticity in a time-like variable of the coefficients).

Recently, in order to prove global rigidity for stationary black holes, Ionescu and Klainerman in \cite{Ionescu-Klainerman01} and \cite{Ionescu-Klainerman02} considered the uniqueness problem across null characteristic surfaces. To obtain uniqueness, they proved a Carleman type estimate. In the next section, we will adapt their proof as a first step towards the proof of Theorem \ref{MainTheorem}.

\section{Proof of Theorem \ref{theorem1}}
\subsection{Carleman Estimates}
The main ingredient of the proof of Theorem \ref{theorem1} is the following Carleman type estimate due to Ionescu and Klainerman \cite{Ionescu-Klainerman01},
\begin{proposition}
 Given three positive numbers $l<\tilde{l}$ and $R$, we define the weight function
\begin{equation}
 f_l(u,v) = \log((u-l)(v-l))
\end{equation}
and the region $\Omega_{(l,\tilde{l})}(R)$ in $\mathbb{R}^{n+1}$
\begin{equation*}
\Omega_{(l,\tilde{l})}(R) =  \{(t,x) \in \mathbb{R}^{n+1} | u> \tilde{l}, v>\tilde{l}\} \cap  \{(t,x) \in \mathbb{R}^{n+1} | (u-l)(v-l) < R\}
\end{equation*}
Then there exists a constant $\lambda(l, \tilde{l}, R)$ and $C(l, \tilde{l}, R)$ such that for any $\varphi \in C^2_0(\Omega_{(l,\tilde{l})}(R))$ and any $\lambda > \lambda(l, \tilde{l}, R)$, we have the following estimates
\begin{equation}\label{Carleman_Estimates}
\lambda^{\frac{3}{2}} \|e^{-\lambda f_l} \cdot \varphi\|_{L^2} + \sum_{\alpha = 0}^{n}\lambda^{\frac{1}{2}}\|e^{-\lambda f_l} \cdot \partial_\alpha \varphi\|_{L^2} \leq C(l, \tilde{l}, R) \|e^{-\lambda f_l} \cdot \Box \varphi\|_{L^2}
\end{equation}
\end{proposition}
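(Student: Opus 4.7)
The plan is to follow the standard Carleman strategy: conjugate the wave operator by the exponential weight, split the conjugated operator into formally self-adjoint and skew-adjoint pieces, and extract coercivity from the resulting commutator. Concretely, I would set $\psi = e^{-\lambda f_l}\varphi$ and rewrite the desired bound as
\begin{equation*}
\lambda^{3/2}\|\psi\|_{L^2} + \sum_{\alpha=0}^{n}\lambda^{1/2}\bigl\|(\partial_\alpha+\lambda\partial_\alpha f_l)\psi\bigr\|_{L^2} \leq C\,\|L_\lambda\psi\|_{L^2}, \qquad L_\lambda := e^{-\lambda f_l}\,\Box\,e^{\lambda f_l}.
\end{equation*}
Since $\Omega_{(l,\tilde l)}(R)$ is adapted to the null foliation and $f_l$ depends only on $(u,v)$, I would pass to coordinates $(u,v,\omega)$ with $\omega\in S^{n-1}$ and $r=(u+v)/2$, in which
\begin{equation*}
\Box = 4\partial_u\partial_v + \frac{2(n-1)}{u+v}(\partial_u+\partial_v) + \frac{4}{(u+v)^2}\Delta_{S^{n-1}}.
\end{equation*}
Because $f_l(u,v)=\log(u-l)+\log(v-l)$ satisfies $\partial_u\partial_v f_l = 0$, the conjugation is clean:
\begin{equation*}
L_\lambda = 4\partial_u\partial_v + \frac{4\lambda}{v-l}\,\partial_u + \frac{4\lambda}{u-l}\,\partial_v + \frac{4\lambda^2}{(u-l)(v-l)} + (\text{$\lambda$-independent correction from } r^{-1}\partial_r \text{ and } r^{-2}\Delta_{S^{n-1}}).
\end{equation*}

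Next I would split $L_\lambda = S_\lambda + A_\lambda$ into its symmetric and skew-symmetric parts with respect to the Minkowski volume $\tfrac{1}{2}r^{n-1}\,du\,dv\,d\omega$, placing the $4\partial_u\partial_v$ term, the $\lambda^2$ zeroth-order piece, and the angular Laplacian into $S_\lambda$, and the $\lambda$-weighted transport terms into $A_\lambda$. Expanding
\begin{equation*}
\|L_\lambda\psi\|_{L^2}^2 = \|S_\lambda\psi\|_{L^2}^2 + \|A_\lambda\psi\|_{L^2}^2 + \langle [S_\lambda,A_\lambda]\psi,\psi\rangle,
\end{equation*}
the crux is to verify that $[S_\lambda,A_\lambda]$ dominates $\lambda^3\|\psi\|_{L^2}^2 + \lambda\|\nabla\psi\|_{L^2}^2$ on $\Omega_{(l,\tilde l)}(R)$. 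Using $\partial_u(1/(u-l))=-1/(u-l)^2$ and the analogous identity in $v$, a direct calculation yields leading contributions of order $\lambda^3/((u-l)(v-l))^2$ on $|\psi|^2$ and $\lambda/((u-l)(v-l))$ on $|\partial_u\psi|^2 + |\partial_v\psi|^2 + r^{-2}|\nabla_\omega\psi|^2$, each uniformly bounded below on the region $\{u,v>\tilde l,\ (u-l)(v-l)<R\}$.

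The final step is to undo the conjugation: since $\partial_\alpha f_l$ is bounded on $\Omega_{(l,\tilde l)}(R)$, the identity $\partial_\alpha(e^{\lambda f_l}\psi) = e^{\lambda f_l}(\partial_\alpha+\lambda\partial_\alpha f_l)\psi$ transfers the estimate on $\psi$ back to $\varphi$, and the threshold $\lambda(l,\tilde l,R)$ emerges from absorbing into the main commutator the lower-order perturbations contributed by $\tfrac{n-1}{r}\partial_r$ and $r^{-2}\Delta_{S^{n-1}}$. The main obstacle I anticipate is precisely this commutator positivity step: one must carefully track every cross term produced by commuting the principal $4\partial_u\partial_v$ with the first-order and angular corrections, and also every term arising from the non-trivial conjugation measure $r^{n-1}$, to ensure that no indefinite contribution can overwhelm the leading $\lambda^3$ coercivity. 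This is the analog in the present setting of the classical pseudoconvexity requirement on the level sets of $f_l$, and it is exactly where the hypothesis $\tilde l > l$ (keeping $\Omega_{(l,\tilde l)}(R)$ a positive distance from the singular locus $\{u=l\}\cup\{v=l\}$) is indispensable.
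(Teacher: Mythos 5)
The paper does not actually prove this Proposition: it is quoted verbatim from Ionescu--Klainerman \cite{Ionescu-Klainerman01}, so what you are attempting is a from-scratch reproof. Your outline (conjugate by $e^{-\lambda f_l}$, split into symmetric and skew parts, extract positivity from $\langle[S_\lambda,A_\lambda]\psi,\psi\rangle$) is the natural first thing to try, and parts of your computation are right: with $f_l=\log(u-l)+\log(v-l)$ one does get $[S_\lambda,A_\lambda]$ contributions of size $\lambda^3 f_u^2f_v^2|\psi|^2$ and $\lambda\bigl((v-l)^{-2}|\partial_u\psi|^2+(u-l)^{-2}|\partial_v\psi|^2\bigr)$ with favorable sign. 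But the step you flag as ``to verify'' is precisely where the argument as stated breaks, and your claim that the commutator also yields a positive term of order $\lambda$ on $r^{-2}|\nabla_\omega\psi|^2$ is not correct. Since $f_l$ has no angular dependence, the only angular terms in $[S_\lambda,A_\lambda]$ come from commuting $A_\lambda=4\lambda(f_v\partial_u+f_u\partial_v)$ with the coefficient $r^{-2}$ of $\Delta_{S^{n-1}}$ (and with the measure $r^{n-1}$), and these enter with the \emph{unfavorable} sign: schematically $[r^{-2}\Delta_\omega,A_\lambda]\sim +\lambda r^{-3}(f_u+f_v)\Delta_\omega$, which pairs to $-\lambda r^{-3}(f_u+f_v)\|\nabla_\omega\psi\|^2$. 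So the commutator alone does not dominate $\lambda\|\nabla\psi\|^2$, and in fact cannot: this reflects the geometry of the problem. The level sets of $(u-l)(v-l)$ admit null tangent vectors with angular components, and for such $X$ one computes $\nabla^2\bigl(-\log((u-l)(v-l))\bigr)(X,X)=\tfrac{2l}{r}\,|W|^2/\bigl((u-l)(v-l)\bigr)$, where $W$ is the angular part of $X$. The pseudoconvexity is therefore only marginal, all of it coming from the shift by $l$ (it vanishes identically when $l=0$), and it lives exactly in the angular-null directions where your commutator produces nothing positive.

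Consequently the heart of the proof is missing: one must recover the angular control by a finer mechanism --- e.g.\ additional multipliers (pairing $L_\lambda\psi$ against $\lambda\psi$ as well as $S_\lambda\psi$, $A_\lambda\psi$), a microlocal splitting in which $\|A_\lambda\psi\|^2$ supplies positivity away from the set where the weight's Hamiltonian derivative vanishes, or, as Ionescu and Klainerman do, carefully perturbed weights together with the conditional-pseudoconvexity structure relative to $\partial_t$. This is also where the dependence of $\lambda(l,\tilde l,R)$ and $C(l,\tilde l,R)$ on $l$ (degenerating as $l\to0$) really enters, not merely through keeping away from $\{u=l\}\cup\{v=l\}$ as you suggest. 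Your scheme does close in $1+1$ dimensions, where there are no angular directions --- consistent with the paper's remark that the one-dimensional case is elementary --- but in $\mathbb{R}^{n+1}$, $n\geq2$, the proposal as written asserts the key positivity rather than proving it, and the asserted form of that positivity is false.
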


\subsection{Vanishing on the First Neighborhood $\mathcal{O}$}
To apply the Carleman estimates, we first construct a cut-off function $\eta_{\varepsilon, \delta}(u,v)$. Let $\chi_{+}(x)$ be a smooth function defined on $\mathbb{R}$ supported on $\mathbb{R}^+$ and equal to $1$ on $[1,\infty)$. Let $\chi_{-}(x) = \chi_{+}(-x)$. We will take $\tilde{l} =1$ and take a real number $l \in (0,1)$. Recall that
\begin{equation*}
 \mathcal{D}=\mathcal{D}_{1,1}=\{(t, x)| 1<u<3, 1<v<3\},
\end{equation*}
we also define the first neighborhood to be
\begin{equation*}
\mathcal{O} = \{(t,x) \in \mathcal{D}| (u-l)(v-l)<(3-l)(1-l)\}.
\end{equation*}
The size of $l$ will determine the size of the first neighborhood. In fact, one can take $l$ as small as we want, this will enlarge the first neighborhood. We define
\begin{equation}
 \eta_{\varepsilon, \delta}(u,v) = \chi_{+}(\frac{(u-1)(v-1)}{\varepsilon})\chi_{-}(\frac{(u-l)(v-l)-(3-l)(1-l)}{\delta})
\end{equation}
We apply Carleman estimates \eqref{Carleman_Estimates} to the function $\eta_{\varepsilon, \delta}(u,v)\varphi(x)$, by ignoring the derivatives on the left hand side (these terms are useful when one deals with non-homogenous wave equations), we have
\begin{align}\label{estimate_1}
\lambda^{\frac{3}{2}} \|e^{-\lambda f_l} \cdot \eta_{\varepsilon, \delta} \cdot \varphi\|_{L^2} &\lesssim \|e^{-\lambda f_l} \cdot \Box(\eta_{\varepsilon, \delta} \cdot \varphi)\|_{L^2} \notag\\
&\lesssim \|e^{-\lambda f_l} \cdot  \varphi \cdot \Box\eta_{\varepsilon, \delta} \|_{L^2} + \|e^{-\lambda f_l} \cdot \nabla^\alpha \eta_{\varepsilon, \delta} \cdot \nabla_\alpha \varphi\|_{L^2}
\end{align}
where the repeated indices are understood as subject to the Einstein convention with respect to the standard Lorentzian metric on Minkowski spaces.
On the region $$\mathcal{D} = \{(u,v)| u-1 \in [0,1) \quad \text{and} \quad  v-1 \in [0,1)\},$$ since $\varphi \in C^2(\mathbb{R}^{3+1})$ vanishes on the boundary $\mathcal{H}$ of $\mathcal{D}$ (this is a consequence of Huygens principle. We remark that, in fact, to show that $\varphi$ vanishes on $\mathcal{D}$, we only need to assume $\varphi$ vanishes on $\mathcal{H}$), we know that there is a function $\psi$ such that
\begin{equation}
  \varphi = (u-1)(v-1) \psi.
\end{equation}
We now show that
\begin{equation}
 |\Box \eta_{\varepsilon, \delta}| \lesssim_{\delta} \frac{1}{\varepsilon} C.
\end{equation}
Since $\eta_{\varepsilon, \delta}$ depends only on $(u,v)$, then $\Box = -\partial_u \partial_v$ as highest order derivatives on $\eta_{\varepsilon, \delta}$. So the worst possible scenario is
\begin{align*}
 \partial_u \partial_v \eta_{\varepsilon, \delta} &= \partial_u \partial_v [  \chi_{+}(\frac{(u-1)(v-1)}{\varepsilon})\chi_{-}(\frac{(u-l)(v-l)-(3-l)(1-l)}{\delta})]\\
&=\chi_{-} \partial_u \partial_v [\chi_{+}(\frac{(u-1)(v-1)}{\varepsilon})] + O_{\delta} (\frac{1}{\varepsilon})\\
&=\frac{(u-1)(v-1)}{\varepsilon^2} \chi_{+}''\chi_{-}  + O_{\delta} (\frac{1}{\varepsilon})
\end{align*}
Since $\chi_{+}''$ is supported in the region where $|u-1||v-1| \leq \varepsilon$, this gives the desired estimates.
We turn to the following estimates
\begin{align*}
 \nabla^\alpha \eta_{\varepsilon, \delta} \cdot \nabla_\alpha \varphi &= -\frac{1}{2}\partial_u \varphi \partial_v \eta_{\varepsilon, \delta}- \frac{1}{2}\partial_u \eta_{\varepsilon, \delta} \partial_v \varphi = \frac{(u-1)(v-1)}{\varepsilon} \psi \cdot \chi_{+}''\chi_{-} + O_\delta(1)
&\lesssim O_\delta(1)
\end{align*}
By putting all the estimates together, we find that terms in \eqref{estimate_1} are independent of $\varepsilon$. This allows us to take $\varepsilon \rightarrow 0$. Now we can modify the cut-off function to be
\begin{equation}
 \eta_{\delta}(u,v) = \chi_{-}(\frac{(u-l)(v-l)-(3-l)(1-l)}{\delta})
\end{equation}
with the estimates
\begin{align}\label{estimate_2}
\lambda^{\frac{3}{2}} \|e^{-\lambda f_l} \cdot \eta_{\delta} \cdot \varphi\|_{L^2} &\lesssim \|e^{-\lambda f_l} \cdot  \varphi \cdot \Box\eta_{\delta} \|_{L^2} + \|e^{-\lambda f_l} \cdot \nabla^\alpha \eta_{\delta} \cdot \nabla_\alpha \varphi\|_{L^2} \notag\\
&\lesssim_\delta  \|e^{-\lambda f_l}  \|_{{L^2}(\mathcal{D}_\delta)}
\end{align}
where $\mathcal{D}_\delta = \{x \in \mathcal{D} | \nabla \eta_\delta(x) \neq 0\}$. Notice that the region $\mathcal{D}^1_\delta = \{x \in \mathcal{D} | \eta_\delta(x) = 1\}$ is contained in $\mathcal{D} - \mathcal{D}_\delta$, and most importantly, we have
\begin{equation}
 \inf_{\mathcal{D}^1_\delta} e^{-\lambda f_l} \geq \sup_{\mathcal{D}_\delta}  e^{-\lambda f_l}
\end{equation}
Back to \eqref{estimate_2}, we have
\begin{align*}
\lambda^{\frac{3}{2}} \inf_{\mathcal{D}^1_\delta} e^{-\lambda f_l} \| \varphi\|_{L^2(\mathcal{D}^1_\delta)} &\lesssim \lambda^{\frac{3}{2}} \|e^{-\lambda f_l} \cdot \eta_{\delta} \cdot \varphi\|_{L^2(\mathcal{D})} \lesssim_\delta  \|e^{-\lambda f_l}  \|_{{L^2}(\mathcal{D}_\delta)} \lesssim \sup_{\mathcal{D}_\delta}  e^{-\lambda f_l}  \|1 \|_{{L^2}(\mathcal{D}_\delta)}
\end{align*}
Let $\lambda \rightarrow \infty$, we have $\|\eta_{\delta} \cdot \varphi\|_{L^2(\mathcal{D}^1_\delta)}=0$, which implies
\begin{equation}
 \varphi = 0
\end{equation}
on $\mathcal{D}^1_\delta$ and let $\delta \rightarrow 0$, we know that $\varphi$ vanishes on the first neighborhood $\mathcal{O}$.

\subsection{Vanishing on the full neighborhood $\mathcal{D}$}
In the previous subsection, thanks to Carleman estimates adapted to the bifurcate null hypersurface $\mathcal{H}$, we are able to show that $\varphi$ vanishes on $\mathcal{O}$. We turn to the proof that $\varphi$ also vanishes on $\mathcal{O}^c = \mathcal{D}-\mathcal{O}$.

For a given small parameter $\delta >0$, we first define slightly smaller regions $\mathcal{D}(\delta)$ and $\mathcal{O}^c(\delta)$ as follows,
\begin{align*}
 \mathcal{D}(\delta) &= \{(t,x) \in \mathcal{D} | 1 < u < 3-\delta, 1 < v < 3-\delta \},\\
 \mathcal{O}^c(\delta) &=\{(t,x) \in \mathcal{O}^c | 1 < u < 3-\delta, 1 < v < 3-\delta \}.
\end{align*}
Thus it suffices to show that $\varphi$ vanishes on $\mathcal{O}^c(\delta)$ for all $\delta > 0$. The key point of the proof concerns the geometry of the region $\mathcal{D}(\delta)$,
\begin{lemma}
 $\mathcal{D}(\delta)$ can be foliated by a family of time-like hypersurfaces $H_s$ parameterized by $s \in \mathbb{R}^+$.
\end{lemma}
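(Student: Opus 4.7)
The plan is to exhibit an explicit foliation using the level sets of the same quadratic weight $(u-l)(v-l)$ that already organized the Carleman argument and defined the first neighborhood $\mathcal{O}$. Concretely, I would fix some $l \in (0,1)$ (the same $l$ used in the definition of $\mathcal{O}$), set $f(u,v) = (u-l)(v-l)$, and define
\begin{equation*}
H_s = \{(t,x) \in \mathcal{D}(\delta) \mid (u-l)(v-l) = s\}, \qquad s \in I \subset \mathbb{R}^+,
\end{equation*}
where $I$ is the open interval of values attained by $f$ on $\mathcal{D}(\delta)$. A relabeling $s \mapsto s - \inf I$ (or a square root) puts the parameter on a subinterval of $\mathbb{R}^+$ if desired.

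The first step is to check that $\{H_s\}$ partitions $\mathcal{D}(\delta)$: in the open square $(1, 3-\delta)^2$ of $(u,v)$-values, $f$ is smooth with everywhere non-vanishing gradient, and its level sets are hyperbolic arcs that sweep out the square exactly once as $s$ ranges over $I$; tensoring with the spheres of constant $(t,r)$ promotes these arcs to the hypersurfaces $H_s$ in $\mathbb{R}^{n+1}$. Then the key computation is the time-like character of each leaf. In double-null coordinates the Minkowski metric reads $g = du\,dv + r^2 g_{S^{n-1}}$, so the inverse metric yields
\begin{equation*}
g(df, df) = 2 g^{uv}\,\partial_u f \,\partial_v f = 4 (u-l)(v-l).
\end{equation*}
On $\mathcal{D}(\delta)$ one has $u > 1 > l$ and $v > 1 > l$, so $g(df,df) > 0$ everywhere; the conormal $df$ is space-like, which is exactly the condition for $H_s = \{f = s\}$ to be a time-like hypersurface.

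There is no genuine obstacle here, but the one point to be careful about is the sign convention: one must verify that a space-like conormal corresponds to a time-like hypersurface (equivalently, one can directly exhibit a time-like vector tangent to $H_s$ by solving $df(X) = 0$ with, e.g., $X = (v-l)\partial_u - (u-l)\partial_v$ and checking $g(X,X) < 0$). Once this is established, the lemma follows, and the foliation $\{H_s\}$ is then ready to be used as the family of surfaces along which Proposition \ref{local_uniqueness_theorem} or a further Carleman-type propagation argument can be applied to extend the vanishing of $\varphi$ from $\mathcal{O}$ to all of $\mathcal{D}(\delta)$.
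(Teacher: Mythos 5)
Your verification that the level sets of $f=(u-l)(v-l)$ are time-like is correct: in null coordinates $g(df,df)=4(u-l)(v-l)>0$ on $\mathcal{D}(\delta)$, a space-like conormal does give a time-like hypersurface, and these level sets do sweep out $\mathcal{D}(\delta)$, so the lemma as literally stated is proved (the reparameterization onto $\mathbb{R}^+$ is harmless). In spirit this is the same as the paper's proof -- an explicit level-set foliation plus a sign computation the paper leaves implicit -- but the paper uses a different function, $H=\frac{u-1}{3-\delta-u}\cdot\frac{v-1}{3-\delta-v}$, and the difference matters for what the lemma is for.

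The foliation is only a vehicle for the continuity argument in the next paragraph, which needs two properties: $H_s\subset\mathcal{O}\cap\mathcal{D}(\delta)$ for small $s$, and, crucially, compactness of $H_{s_0}\cap\mathcal{O}^c(\delta)$, so that finitely many of the neighborhoods supplied by Proposition \ref{local_uniqueness_theorem} cover it and yield a uniform advance from $s_0$ to $s_0+\varepsilon$. The paper's choice is engineered for this: its leaves can approach the boundary of the square $1<u,v<3-\delta$ only at the two corners $(u,v)=(1,3-\delta)$ and $(3-\delta,1)$ (elsewhere on the boundary $H$ tends to $0$ or $\infty$), and near those corners $(u-l)(v-l)$ is close to $(1-l)(3-\delta-l)<(3-l)(1-l)$, i.e.\ one is inside $\mathcal{O}$; hence the portion of each leaf in $\mathcal{O}^c(\delta)$ stays away from $\partial\mathcal{D}(\delta)$ and is compact. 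Your leaves with $s\geq(3-l)(1-l)$ -- exactly the ones on which the covering argument must be run, since the leaf $s=(3-l)(1-l)$ is $\partial\mathcal{O}\cap\mathcal{D}(\delta)$ -- terminate on the far edges $\{u=3-\delta\}$ and $\{v=3-\delta\}$, so $H_s\cap\mathcal{O}^c(\delta)$ is not compact, the neighborhoods $U(p)$ have no uniform size near those edges, and the openness step of the continuity argument is no longer justified. So your construction is a valid proof of the isolated statement, but if substituted for the paper's it would leave a genuine gap in the proof that $\varphi$ vanishes on all of $\mathcal{D}(\delta)$; the precise form of the foliation, not just its existence, is what the paper exploits.
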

\begin{proof}
We shall exhibit explicitly such a family. We define a function
\begin{equation*}
 H(t,x) = \frac{u-1}{3-\delta-u} \cdot \frac{v-1}{3-\delta-v}.
\end{equation*}
Thus for $s \in \mathbb{R}^+$, it is easy to show that the level surfaces $H_s = H^{-1}(s)$ are time-like thus they give the desired foliation.
\end{proof}
We shall show that for each $s \in \mathbb{R}$, $\varphi$ vanishes on $H_s$. Notice that if $s$ is small enough,
\begin{equation*}
 H_s \subset \mathcal{O} \cap \mathcal{D}(\delta).
\end{equation*}
In view of the fact that $\varphi \equiv 0$ on the first neighborhood $\mathcal{O}$, $\varphi$ vanishes on $H_s$ for small $s$. We now use continuity argument to extend such $s$ to the whole parameter space $\mathbb{R}^+$. It suffices to show that, if $\varphi$ vanishes on $H_s$ for $s \leq s_0$, thus there is a $\varepsilon >0$, such that $\varphi$ vanishes on $H_s$ for $s \leq s_0 + \varepsilon$. On $H_{s_0}$, we can apply Proposition \ref{local_uniqueness_theorem} to conclude that for each $p \in H_{s_0}$, there is a neighborhood $U(p)$ of $p$ in $\mathcal{D}(\delta)$ such that $\varphi \equiv 0$ on $U(p)$. Since $\varphi$ is already shown to be zero on $\mathcal{O}$, thus we may concentrate on the region $\mathcal{O}^c(\delta)$. We observe that $H_{s_0} \cap \mathcal{O}^c(\delta)$ is compact relative to the reduced topology from $\mathcal{D}(\delta)$, thus we can use finite many $U(p)$'s to cover $H_{s_0}$. Obviously, this cover allows us to extend the range of $s$ to $(0,s_0 + \varepsilon)$ for some $\varepsilon>0$.

The previous argument shows that $\varphi \equiv 0$ on $\mathcal{D}(\delta)$. By taking $\delta \rightarrow 0$, we conclude that $\varphi \equiv 0$ on the entire $\mathcal{D}$. According to Huygens principle, this is the maximal domain in which one expects $\varphi$ to vanish. This completes the proof of Theorem \ref{theorem1}.

\begin{remark}\label{remark_bifurcate_picture}
In this section, to show that $\varphi \equiv 0$ on $\mathcal{D}$, we only require $\varphi$ vanishes on the bifurcate null hypersurface
\begin{equation*}
 \{(t,x)| 0 \leq t\leq 1, v = 1\} \cup \{(t,x)| -1 \leq t \leq 0, u = 1\}.
\end{equation*}
\end{remark}

\section{Proof of Theorem \ref{MainTheorem}}
In view of the proof of Theorem \ref{theorem1} and Remark \ref{remark_bifurcate_picture}, it is easy to see that $\varphi \equiv 0$ on $\mathcal{D}_{\varepsilon,\varepsilon}$. We shall repeat this argument. Let $L = k \varepsilon + \varepsilon'$ where $k \in \mathbb{N}$ and $0 \leq \varepsilon' < \varepsilon$. We define a sequence of regions $\mathcal{D}_j$ for $j=1, 2, \cdots, k$ as follows,
\begin{equation*}
 \mathcal{D}_j = \{(t,x) \in \mathcal{D}| 1+2 (j-1)\varepsilon \leq u \leq 1+2 j \varepsilon \}.
\end{equation*}
We also define $\mathcal{D}' = \mathcal{D} - \cup_{j=1}^k \mathcal{D}_j$. Notice that $\mathcal{D}_{1} = \mathcal{D}_{\varepsilon,\varepsilon}$, thus $\varphi \equiv 0$ on $\mathcal{D}_1$. We show that $\varphi \equiv 0$ on each $\mathcal{D}_j$ where $1\leq j \leq k$. In fact, if $\varphi \equiv 0$ for $\mathcal{D}_{j}$, in view of the time translation invariance of the wave equation, we can obviously use the proof of Theorem \ref{theorem1} and Remark \ref{remark_bifurcate_picture} to conclude that $\varphi \equiv 0$ on $\mathcal{D}_{j+1}$.

For $\mathcal{D}'$ we can repeat the above argument. An easy limit argument completes the proof of Theorem \ref{MainTheorem}.

\begin{remark}
There are many ways to generalize Theorem \ref{MainTheorem}. If one still works on Minkowski space, we can allow lower order terms as we have in Proposition \ref{local_uniqueness_theorem} and we can also work with a system of equations instead of a single one. The strategy of the proof is the same. For the existence of the first neighborhood, we still use the Carleman type estimates of \emph{Ionescu} and \emph{Klainerman}; for the extension to the whole neighborhood, the argument remains the same. If one wants to generalize to other space-times, we notice that once we can construct optical functions $u$ and $v$ the argument for the existence of the first neighborhood is stable. This is shown in \cite{Ionescu-Klainerman01} and \cite{Ionescu-Klainerman02}. We only require the space-time is smooth. While for extensions to the whole neighborhood, we want to apply Proposition \ref{local_uniqueness_theorem} and this may require the analyticity of the space-time and the hypersurfaces $H_s$. Nonetheless, this theorem holds for Schwarzschild space-times or Kerr Space-times.
\end{remark}

\end{document}